\theoremstyle{plain}
\newtheorem{theorem}{Theorem}[section]
\newtheorem{lemma}[theorem]{Lemma}
\newtheorem{corollary}[theorem]{Corollary}
\newtheorem{proposition}[theorem]{Proposition}
\theoremstyle{definition}
\newtheorem{definition}[theorem]{Definition}
\newenvironment{myalg}[1]{%
  \begingroup \rm
  \vspace{2ex}\hrule\vspace{1ex}
  \refstepcounter{theorem}
  \noindent\textbf{Algorithm \thetheorem.\ 
    #1}\vspace{1ex}\hrule\vspace{1ex}}{\vspace{1ex}\hrule\vspace{2ex}\endgroup}
\newcommand{\CC}{\ensuremath{\mathds{C}}}
\newcommand{\KK}{\CC}
\newcommand{\cocoa}{{\hbox{\rm C\kern-.13em o\kern-.07em C\kern-.13em
      o\kern-.15em A}}\xspace}
\newcommand{\singular}{\textsc{Singular}\xspace}
\newcommand{\divides}{\mathop{|}}
\begin{document}
\author[A. Dickenstein]{Alicia Dickenstein}
\address{Depto. de Matem\'atica, FCEN, Universidad de Buenos Aires, and IMAS-CONICET
  Argentina}
\email{alidick@dm.uba.ar}

\author[E. A. Tobis]{Enrique A. Tobis$^\dag$}
\address{Depto. de Matem\'atica, FCEN, Universidad de Buenos Aires,
  Argentina}
\email{etobis@dc.uba.ar}
\thanks{$^\dag$ Corresponding author. Present address: Children's Hospital Boston, 300 Longwood Ave., Boston, MA, USA}

\title{Independent Sets from an Algebraic Perspective} \date{}
\thanks{Both authors were partially supported by UBACYT X064, CONICET
  PIP 112-200801-00483 and ANPCyT, Argentina}

\begin{abstract}
  In this paper, we study the basic problem of counting independent
  sets in a graph and, in particular, the problem of counting
  antichains in a finite poset, from an algebraic
  perspective. We show that neither independence polynomials of bipartite
  Cohen-Macaulay graphs nor  Hilbert series of
  initial ideals of radical zero-dimensional complete intersections
  ideals, can be evaluated in polynomial time, unless $\#P=P$. 
Moreover, we present a family of radical
  zero-dimensional complete intersection ideals $J_P$ associated to a
  finite poset $P$, for which we describe a universal Gr\"obner basis.
  This implies that the bottleneck in computing the dimension of the
  quotient by $J_P$ (that is, the number of zeros of $J_P$) using
  Gr\"obner methods lies in the description of the standard
  monomials.
\end{abstract}

\maketitle

\section{Introduction}
\label{sec:intro}

We approach the basic problem of counting independent
sets in a graph and, in particular, the problem of counting antichains
in a finite partially ordered set, from an algebraic perspective. We
derive structural considerations and complexity results.

The use of algebraic methods in the study of discrete problems, in
particular problems in graph theory, was pioneered by Richard Stanley
\cite{stanley} and L\'aszl\'o Lov\'asz \cite{lovasz}, from the
combinatorics side, and J\"urgen Herzog, Takayuki Hibi, Aron Simis,
Wolmer Vasconcelos and Rafael Villarreal \cite{herhibi,
  simisvasvilla,monal} from the commutative algebra side. The
enumeration of independent sets has been approached using Reverse
Search (\cite{eppstein}), the Belief Propagation heuristic
(\cite{count2}) and Binary Decision Diagrams (\cite{count1}), to name
a few techniques.

The main algebraic object we will use is the Hilbert
Series of the initial monomial ideal associated with a graph. The
problem of computing a Hilbert Series is NP-Complete
(\cite{davemike}). There is a standard algorithm (first proposed in
\cite{moramoller}) for computing the Hilbert Series of a quotient
$\KK[\mathbf{x}]/I$, where $I$ is a homogeneous ideal in
$\KK[\mathbf{x}]$. There are some classes of
ideals for which this algorithm finishes in time polynomial in the
input, e.g. Borel (\cite{davemike}) and Borel-type ideals
(\cite{hashemi}).  Open computer algebra systems (\cocoa
\cite{CocoaSystem}, \singular \cite{GPS}, Macaulay2 \cite{M2})
implement the standard algorithm in subtly different ways. 
We suggest \cite[Ch. 5]{cca2} as a general
reference on Hilbert Series.

The connection between independent sets and commutative
algebra is spearheaded by the following construction.
\begin{definition}
  Let $G = (V,E)$ be a graph, with $V = \{v_1,\allowbreak
  \ldots,\allowbreak v_n\}$. The \emph{edge ideal}
  (\cite{simisvasvilla,monal}) $I'_G \subseteq \KK[x_1,\ldots,x_n]$ of
  $G$ is defined as
\begin{equation}
  \label{eq:1}
  I'_G = \langle x_ix_j, \text{ for all } (v_i,v_j) \in E \rangle.
\end{equation}
\end{definition}
This ideal links independent sets in $G$ and certain monomials. If
$x^\alpha$ is a monomial \emph{not} in $I'_G$ (termed a \emph{standard
monomial}), then it encodes an
independent set $S$ of $G$ in this way:
\begin{equation}
  \label{eq:2}
  v_i \in S \Leftrightarrow x_i \mathop{|} \mathbf{x}^\alpha.
\end{equation}
This encoding is not one-to-one. For example, the monomials $x_1$ and
$x_1^2$ represent the same independent set: $\{v_1\}$. We introduce a
slightly modified version of $I'_G$, with which we obtain a bijective
encoding.
\begin{definition}
  Let $G = (V,E)$ be a graph. We define the \emph{modified edge ideal}
  $I_G$ of $G$ as
  \begin{equation}
    \label{eq:3}
    I_G = I'_G + \langle x_i^2, \text{ for all } v_i\rangle.
  \end{equation}
\end{definition}
Notice that $I_G$ is zero-dimensional (the origin is the only
root), and that its standard monomials are square-free,
with the degree of a monomial equal to the size of the
corresponding independent set. The number of independent (or stable)
sets in $G$ thus coincides with the $k$-vector space dimension of the
quotient of the polynomial ring in $n$ variables over any field $k$ by
the ideal $I_G$. This dimension is computed in
\cite{CocoaSystem,M2,GPS} using the additivity of the Hilbert function
in short exact sequences.

In Section~\ref{sec:hilbert} we recall the definition of the Hilbert
function (see~\eqref{eq:4}) and we analyze the instantiation of the
standard algorithm for computing the Hilbert Series for the ideals
$I_G$. Our main result in this section shows that the recursive
calls simply correspond to counting independent sets of $G$ that
contain a pivot vertex, and those that do not contain it. In
Section~\ref{sec:posets-gb}, we turn our attention to the subproblem
of counting the antichains of a finite poset. We
present 
the universal reduced Gr\"obner Basis for a family of zero-dimensional
radical ideals derived from posets. 
In Section~\ref{sec:cohen}, we
specialize our study in the case of Cohen-Macaulay bipartite graphs, corresponding to
Cohen-Macaulay ideals $I'_G$. Using the characterization in~\cite{herhibi}, we show
that counting independent sets in such graphs is equivalent to evaluating at 
$2$ the independence polynomial of the comparability graph of a general finite
poset. Section~\ref{sec:complex} contains our complexity study. We prove that
antichain polynomials cannot be evaluated in polynomial time at any non-zero
rational number $t$ unless $P= \#P$. When combined with the algebraic results
from the previous sections we deduce Corollaries~\ref{cor:Hilberthard} 
and~\ref{cor:CMhard} on the intractability of the evaluation of Hilbert
Series of initial ideals of zero-dimensional complete intersections and
independence polynomials of Cohen-Macaulay bipartite graphs.
We close with a few experimental observations in Section~\ref{sec:exper}. 

\section{Counting independent sets via the computation of Hilbert
  Series}
\label{sec:hilbert}

We start by recalling a few definitions concerning Hilbert Series. Let
$M$ be a positively graded finitely generated $\CC[\mathbf{x}]$-module
(e.g. the quotient \(\KK[\mathbf{x}]/I_G \) for some graph \(G\)). We
can write
\[
M = \bigoplus_{0 \leq i} M_i,
\]
where $M_i$ is the subspace of $M$ of degree $i$. The Hilbert Function
(\(\mathit{HF}_M\)) of $M$ maps $i$ onto $\dim_\KK(M_i)$. The Hilbert
Series ($\mathit{HS}_M$) of $M$ is the generating function
\begin{equation}
  \label{eq:4}
  \mathit{HS}_M(z) = \sum_{0 \leq i}\mathit{HF}_M(i)\,z^i.
\end{equation}
If $M = \KK[\mathbf{x}]/I$ for a monomial ideal $I$, then
$\mathit{HF}_M(i)$ is the number of standard monomials of degree $i$ (that
is, monomials which \emph{are not} in $I$). 

If we take $I = I_G$ for some graph $G$, as we
mentioned in the introduction, $\mathit{HF}_M(i)$ is then the
number of independent sets of size $i$ in $G$. In this case, the
Hilbert Series of $\KK[\mathbf{x}]/I_G$ is a polynomial, called the
\emph{independence polynomial} of $G$. As usual, we denote this polynomial by
 $I(G,x)$. We refer the reader to \cite{indeppoly} for a
comprehensive survey of independence polynomias.

The standard algorithm for computing $\mathit{HS}_M$ hinges on the
following property. If we have a homogeneous exact sequence of
finitely generated graded $\CC[\mathbf{x}]$-modules
\begin{equation}
\label{eq:5}
0 \longrightarrow M' \longrightarrow M \longrightarrow M''
\longrightarrow 0,
\end{equation}
then
\begin{equation}
  \label{eq:6}
  \mathit{HS}_M(z) = \mathit{HS}_{M'}(z) + \mathit{HS}_{M''}(z).
\end{equation}
Given a finitely generated graded $\CC[\mathbf{x}]$-module $M$ and
$f \neq 0$ a homogeneous polynomial of degree $d$, we have the
following \emph{multiplication sequence}
\begin{equation}
  \label{eq:7}
  0 \longrightarrow [M/(0 :_M (f))](-d)
  \stackrel{\varphi}{\longrightarrow} M \longrightarrow M/fM
  \longrightarrow 0,
\end{equation}
where $\varphi$ is induced by multiplication by $f$. Here, $(0 :_M
(f)) = \{g \in M, \text{ such that } \allowbreak gf = \allowbreak
0\}$, and $(-d)$ induces a degree shift, so that $\varphi$ is a
homogeneous map of degree $0$. Rewriting equation (\ref{eq:6}) we
obtain
\begin{equation}
  \label{eq:8}
  \mathit{HS}_M(z) = \mathit{HS}_{M/fM}(z) + z^d\,\mathit{HS}_{(0:_M (f))}.
\end{equation}
The polynomial $f$ above is called a \emph{pivot}. 

Actually, the standard algorithm does not directly compute the Hilbert
Series. We can see in \cite[Theorem~5.2.20]{cca2} that in the case of the
modified edge ideal $I_G$, the Hilbert Series of $M =
\KK[x_1,\ldots,x_n]/I_G$ has the form
\begin{equation}
  \label{eq:9}
  \mathit{HS}_M = \frac{\mathit{HN}_M(z)}{(1-z)^n},
\end{equation}
where $\mathit{HN}_M(z)$ is called the \emph{Hilbert Numerator}. The
algorithm computes $\mathit{HN}_M(z)$, and the series is then obtained
by dividing it by $(1-z)^n$.

We reproduce the algorithm for computing the Hilbert Numerator of a
monomial ideal (see \cite[Theorem~5.3.7]{cca2}).


\begin{myalg}{Algorithm to compute the Hilbert Numerator of a monomial
    ideal $I$ (called \texttt{HN}).}
  \begin{algorithmic}[1]
    \REQUIRE A set of minimal monomial generators for the ideal $I$.

    \ENSURE The Hilbert Numerator of $\KK[\mathbf{x}]/I$.

    \IF{the minimal generators of $I$ are pairwise coprime}

    \RETURN $\prod_{i=1}^s(1-z^{d_i})$, where $d_i$ is the degree of
    the $i$-th generator of $I$.

    \ELSE 

    \STATE Choose a monomial $p$ as pivot.\label{alg1:step:choice}

    \STATE $f_1 \leftarrow \mathtt{HN}(I:p)$.

    \STATE $f_2 \leftarrow \mathtt{HN}(I + p)$.

    \RETURN $z^{\deg(p)}f_1(z) + f_2(z)$.

    \ENDIF
  \end{algorithmic}
  \label{alg1}
\end{myalg}

Notice that the sets of generators of $I'_G$ and $I_G$ described
in~\eqref{eq:1} and~\eqref{eq:3} are minimal. The process of obtaining
minimal sets of generators for the ensuing recursive calls can be
optimized by performing careful interreductions.

The choice of pivot must satisfy one condition. Namely, 
\begin{equation}
  \label{eq:10}
  \sum \deg(I:p) < \sum\deg(I) \quad \text{and} \quad \sum \deg(I+p) <
  \sum\deg(I).
\end{equation}
Here, $\sum\deg(I)$ denotes the sum of the degrees of all the minimal
monomial generators of $I$. Intuitively, this condition says that the
recursive calls are made on ``smaller'' ideals, and shows that the
algorithm terminates.

The program \cocoa implements this algorithm, and uses a certain
strategy for the choice of pivot in
step~\ref{alg1:step:choice}. First, it chooses any variable $x_i$
appearing in the most number of generators of $I_G$. Then it picks two
random generators containing that variable. The pivot is the highest
power of $x_i$ that divides \emph{both} random generators.

We present a specialized version of Algorithm~\ref{alg1}, suited for
the computation of the Hilbert Series of $\KK[\mathbf{x}]/I_G$ for
any graph $G$.

\begin{theorem}
  \label{thm:specialversion}
  Let $I_G$ be the modified edge ideal of a graph $G$. The general
  algorithm for computing the Hilbert Series of $\KK[\mathbf{x}]/I_G$
  has the specialized version presented in
  Algorithm~\ref{alg:special}.

  This algorithm has an obvious graphical interpretation. The choice
  of step~\ref{alg2:step:choice} corresponds to choosing a node $v$ of
  the graph. The recursive calls of step~\ref{step:combination}
  correspond to counting the independent sets of $G$ that contain $v$
  ($\text{HS}_{\text{Colon}}$) and those that do not contain $v$
  ($\text{HS}_{\text{Plus}}$).

  \begin{myalg}{Specialized algorithm to compute the $\mathit{HS}$ of
      $\KK[\mathbf{x}]/I_G$.}
    \begin{algorithmic}[1]
      \REQUIRE The list $L$ of minimal monomial generators of $I_G$
      described in~(\ref{eq:3}).
      
      \ENSURE The Hilbert Series of $\KK[\mathbf{x}]/I_G$.
      
      \IF{$L$ consists only of variables and squares of
        variables}\label{step:if}
      
      \RETURN $(1+z)^k$, where $k$ is the number of variables which
      appear squared in $L$.\label{step:compint}

      \ELSE

      \STATE Choose a variable $x_i$ that appears squared in
      $L$.\label{alg2:step:choice}
      
      \STATE Colon $\leftarrow$ a minimal set of monomial generators
      of $(\langle L \rangle: x_i)$.\label{alg:special:colon}
      
      \STATE Plus $\leftarrow$ a minimal set of monomial generators of
      $\langle L, x_i \rangle$.\label{alg:special:plus}
      
      \RETURN $z\text{HS}_{\text{Colon}}(z)$ +
      $\text{HS}_{\text{Plus}}(z)$\label{step:combination}

      \ENDIF
    \end{algorithmic}
    \label{alg:special}
  \end{myalg}
\end{theorem}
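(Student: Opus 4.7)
The plan is to induct on the sum of degrees of the minimal generators in $L$, verifying that the base case and the pivot choice of Algorithm~\ref{alg:special} correctly specialize Algorithm~\ref{alg1}, and then to translate the resulting recurrence into graph-theoretic terms. The key structural invariant I would maintain throughout all recursive calls is that the current ideal $\langle L\rangle$ is minimally generated by (i) some linear terms $x_{i_1},\ldots,x_{i_m}$, (ii) squares $x_{j_1}^2,\ldots,x_{j_k}^2$, and (iii) quadratic edge terms $x_j x_l$ with both $j,l$ among the indices in (ii); moreover $m+k=n$, so every variable is represented exactly once in (i) or (ii). This is visibly true for the starting ideal $I_G$ of~\eqref{eq:3} with $m=0$. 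In $\langle L\rangle + x_i$ (where $x_i^2\in L$) the new generator $x_i$ makes $x_i^2$ and every generator containing $x_i$ redundant. In $(\langle L\rangle : x_i)$ the colon replaces $x_i^2$ by $x_i$ and each $x_ix_l\in L$ by $x_l$, and each such new linear $x_l$ in turn makes $x_l^2$ and every edge at $v_l$ redundant; in both branches the invariant is preserved.

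Given the invariant, the base case of Algorithm~\ref{alg:special} (step~\ref{step:if}) triggers exactly when no edges remain in $L$, and the generators of types (i) and (ii) are then pairwise coprime since they involve disjoint variables. Algorithm~\ref{alg1} therefore returns the Hilbert Numerator $(1-z)^m(1-z^2)^k$, and dividing by $(1-z)^n=(1-z)^{m+k}$ via~\eqref{eq:9} gives
\[
\mathit{HS}_M(z) \;=\; \frac{(1-z)^m(1-z)^k(1+z)^k}{(1-z)^{m+k}} \;=\; (1+z)^k,
\]
matching step~\ref{step:compint}. In the recursive step, the variable $x_i$ chosen in step~\ref{alg2:step:choice} is a valid pivot for Algorithm~\ref{alg1}: the degree-$2$ generator $x_i^2$ is replaced in both branches by a strictly lower-degree generator, so condition~\eqref{eq:10} holds. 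Since both recursive ideals live in the same polynomial ring, dividing the identity $z\,\mathit{HN}_{\mathrm{Colon}}+\mathit{HN}_{\mathrm{Plus}}$ returned by Algorithm~\ref{alg1} by $(1-z)^n$ yields the combination $z\,\mathit{HS}_{\mathrm{Colon}}+\mathit{HS}_{\mathrm{Plus}}$ of step~\ref{step:combination}.

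The graphical interpretation then follows directly from the invariant: once the linear generators present in $(\langle L\rangle : x_i)$ and $\langle L\rangle + x_i$ are quotiented out, what remains are precisely the modified edge ideals of the induced subgraphs $G\setminus N[v_i]$ and $G\setminus\{v_i\}$, respectively. Hence the standard monomials of the Colon ideal biject with independent sets of $G\setminus N[v_i]$, which biject with independent sets of $G$ containing $v_i$ by adjoining $v_i$; the prefactor $z$ records the degree shift contributed by the added vertex. The Plus ideal analogously encodes independent sets of $G$ not containing $v_i$.

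The main, modest obstacle is keeping the invariant bookkeeping airtight: after each colon or addition, one must verify carefully which originally minimal generators remain minimal and which become redundant via a newly introduced linear generator, so that the recursive call really acts on an ideal of the same shape. Once that is done, the theorem follows by a direct translation of Algorithm~\ref{alg1}'s recurrence through the identity~\eqref{eq:9}.
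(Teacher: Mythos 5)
Your proposal is correct and follows essentially the same route as the paper: the structural invariant on $L$ (every variable appearing exactly once, linearly or squared, with edge monomials only between squared variables), the Plus/Colon analysis preserving it, the base-case computation $(1-z)^n(1+z)^k/(1-z)^n=(1+z)^k$ via~\eqref{eq:9}, and the graphical reading of the two branches. The only difference is one of emphasis: you verify that a squared variable is an admissible pivot for condition~\eqref{eq:10}, which suffices for correctness of Algorithm~\ref{alg:special}, whereas the paper additionally argues the converse, that \emph{any} pivot of Algorithm~\ref{alg1} satisfying~\eqref{eq:10} is forced to be such a variable, so every run of the general algorithm on $I_G$ takes the specialized form.
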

\begin{proof}
  Algorithm~\ref{alg:special} differs from Algorithm~\ref{alg1} in two
  key steps. In step~\ref{step:if}, the special version does not check
  coprimality, as is done in Algorithm~\ref{alg1}. The other
  difference is in step~\ref{alg2:step:choice}: The specialized
  version chooses a variable, instead of an arbitrary monomial.

  We make a claim that helps us understand why this specialized
  version is correct. In every call to the algorithm, each of the $n$
  variables appears in $L$ raised to the first or to the second
  power. Furthermore, in each call, $L$ contains only the powers just
  mentioned and the ``edge monomials'' $x_ix_j$ of $G$ such that both
  $x_i$ and $x_j$ appear squared in $L$. This leads to an obvious
  graphical interpretation: The list $L$ represents the subgraph of
  $G$ induced by those variables that appear squared in $L$.

  We prove the correctness of the algorithm by showing that the choice
  of a pivot in Algorithm~\ref{alg1} must always yield a variable when
  applied to a modified edge ideal, and that the claim of the previous
  paragraph is true.

  When the algorithm is originally invoked, every variable appears
  squared in $L$. Besides the squares of variables, $L$ contains the
  ``edge monomials'' $x_ix_j$ for every edge $(i,j)$ of $G$. This
  proves that the claim above holds in the first call.

  Assuming that the elements of $L$ have the structure we claim, let
  us show that any choice of pivot yields a variable. 
  Suppose that we employ any conceivable strategy for the choice of
  pivot, always subject to condition~\eqref{eq:10}. The pivot
  $p$ cannot be a multiple of any monomial in $L$. If it is, then
  $\text{Plus} = L$, and the decreasing total degree
  condition~\eqref{eq:10} is not satisfied. The pivot $p$
  must then be a product of variables that appear squared in $L$, but
  it must not be divisible by any ``edge monomial.'' Suppose that the
  pivot is the product of at least two variables. That is, $x_ix_j
  \divides p$, where $x_i^2$ and $x_j^2$ are in $L$, and $x_ix_j$ is
  not in $L$. Then Plus violates the decreasing total degree
  condition~\eqref{eq:10}, because it has the same generators
  as $L$, plus $p$. If $p = 1$, then $\text{Colon} = L$, and
  this violates the decreasing total degree condition. The only valid
  choice is then $p = x_i$, for some $x_i$ that appears squared in
  $L$.

  Once we know that the pivot is always a variable, we can show that
  the claim above holds for Plus and for Colon. In doing so, we also
  explain the second part of the theorem.

  The list of minimal monomial generators for Plus contains all the
  variables that were raised to the first power in $L$. Furthermore,
  it must also contain the pivot $x_i$. The square of $x_i$ is not in
  Plus, because Plus is minimal, and the ``edge monomials'' that
  contained $x_i$, are not present in Plus. The rest of the generators
  in $L$ are unaffected. Therefore, we have that every variable
  appears in Plus either squared or raised to the first power, as we
  wanted to show. Plus corresponds to the graph obtained by removing
  the node that corresponds to $x_i$ and all the edges incident with
  it.
  
  The analysis of Colon is somewhat similar. To obtain a minimal set
  of monomial generators, we just cross out the pivot $x_i$ from every
  generator in $L$ that contains it, and then eliminate multiples. If
  we had an ``edge monomial'' $x_ix_j$, then $x_j$ is in
  Colon. Therefore, the square of $x_j$ is no longer a generator, and
  all the ``edge monomials'' containing $x_j$ are also missing from
  Colon. Again, every variable appears either squared or raised to the
  first power. In this case, we remove the node corresponding to
  $x_i$, all its adjacent nodes and all the edges incident with $x_i$
  or with any node adjacent to $x_i$.

  Let $v$ be the node of $G$ associated with the pivot $x_i$. The
  combination step of the algorithm reflects the meaning of Colon and
  Plus: The independent sets of $G$ are those of Plus (i.e. those
  \emph{do not} that contain $v$) and those of Colon (i.e. those that
  contain $v$).

  The algorithm terminates when there are no more ``edge
  monomials''. Since all the generators are variables, or squares of
  variables, then they are pairwise coprime and satisfy the stopping
  criterion of Algorithm~\ref{alg1}.

  A note is in order about the value returned in the base
  case. Algorithm~\ref{alg1} returns
  \begin{equation}
    \label{eq:13}
    \prod_{i=1}^n(1-z^{d_i}),    
  \end{equation}
  where $d_i$ is the degree of the $i$-th generator. Since in the
  specialized case the generators are of the form $x_i$ or $x_i^2$,
  expression~\eqref{eq:13} has the form
  \begin{equation}
    \label{eq:14}
    (1-z)^n (1+z)^k,
  \end{equation}
  where $k$ is the number of variables that appear squared in
  $L$. According to formula~\eqref{eq:9}, the value returned by
  Algorithm~\ref{alg:special} is the Hilbert Series of
  $\KK[x_1,\ldots,x_n]/I_G$.

  All these observations show that the graphical interpretation is
  accurate and that the specialized version is indeed correct.


\end{proof}


\section{Partially ordered sets and Gr\"obner Bases}
\label{sec:posets-gb}

In this section, we study a family of zero-dimensional radical
complete intersection polynomial ideals associated with posets, first proposed in
\cite{cd}. 

Recall that a \emph{poset} (or \emph{partially ordered set}) is a set $P$,
together with a  (partial order) relation $\leq$ satisfying
\begin{itemize}
\item $a \leq a$, for all $a \in P$.
\item $a \leq b$ and $b \leq a$ implies $a = b$, for all $a$ and $b$
  in $P$.
\item $a \leq b$ and $b \leq c$ implies $a \leq c$ for all $a$, $b$
  and $c$ in $P$.
\end{itemize}
Two elements $a$ and $b$ of $P$ are
\emph{comparable} if $a \leq b$ or if $b \leq
a$. Otherwise, they are \emph{incomparable}. We will usually 
just write $P$ and drop the partial order relation from the notation.

We can associate to a poset $P$ its comparability graph.

\begin{definition}
Let $P$ be a poset. The comparability graph $G(P)$ has
the set $P$ as nodes and there is an edge between two different nodes $a$ and $b$ if
and only if $a,b$ are comparable in $P$. 
\end{definition}
A subset $S$ of a poset 
$P$ is an \emph{antichain} if all the elements of $S$ are pairwise
incomparable in $P$. We write ${\mathcal A}(P)$ for the set of antichains of
$P$. Note that $S \in {\mathcal A}(P)$ if and only if $S$ is an independent 
set of $G(P)$.

\begin{definition}
For any poset $P$ we define the \emph{antichain polynomial} $A(P,x)$
by
\[A(P,x) \, = \, I(G(P),x).\]
\end{definition}
Thus, the $k$-th coefficient of $A(P,x)$ equals the number of antichains of $P$ with $k$ elements and 
the cardinal $|\mathcal A (P)|$ is given by the evaluation $A(P,1)$.

\medskip

Given a finite poset $(P,\leq)$, we define a polynomial ideal
$J_P \subset \CC[x_1,\ldots,x_n]$ by:
\begin{equation}
  \label{eq:15}
  J_P = \langle  x_i - x_i \prod_{v_j \leq v_i} x_j, \text{ for all }
  v_i \in P \rangle.
\end{equation}

\begin{lemma}
  \label{lem:zeroorone}
  Let $P$ be a finite poset. Then the elements of
  $V(J_P)$ are strings of $0$'s and $1$'s.
\end{lemma}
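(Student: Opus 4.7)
The plan is to proceed by induction along the partial order. First I would rewrite each generator of $J_P$ in factored form as
\[
x_i - x_i \prod_{v_j \leq v_i} x_j \;=\; x_i\,\Bigl(1 - \prod_{v_j \leq v_i} x_j\Bigr),
\]
observing crucially that, by reflexivity, $v_i \leq v_i$, so $x_i$ itself appears in the product. Hence, for any $(a_1,\dots,a_n) \in V(J_P)$ and every $i$, I get the dichotomy
\[
a_i = 0 \qquad \text{or} \qquad \prod_{v_j \leq v_i} a_j = 1.
\]

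Next I would set up an induction along a linear extension of $P$ (equivalently, induction on the number of strict predecessors of $v_i$). For the base case, if $v_i$ is a minimal element, then the only $v_j$ with $v_j \leq v_i$ is $v_i$ itself, so the generator specializes to $x_i - x_i^2 = x_i(1-x_i)$, forcing $a_i \in \{0,1\}$ immediately.

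For the inductive step, I would assume $a_j \in \{0,1\}$ for every $v_j < v_i$ and separate the product by pulling $x_i$ out: $\prod_{v_j \leq v_i} a_j = a_i \cdot \prod_{v_j < v_i} a_j$. Invoking the dichotomy above, either $a_i = 0$ (and we are done), or $a_i \cdot \prod_{v_j < v_i} a_j = 1$. In the latter case the inductive hypothesis says every factor $a_j$ with $v_j < v_i$ is $0$ or $1$, so their product is $0$ or $1$; since $a_i$ times this product equals $1$, the product must be $1$ and therefore $a_i = 1$ as well.

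I do not anticipate a serious obstacle here: the argument is essentially a bookkeeping exercise made possible by the fortunate fact that $x_i$ sits inside its own product (this is what eliminates the need to solve a nontrivial polynomial in $a_i$). The only small care required is to phrase the induction on the poset correctly, namely on the set of strict predecessors, so that the inductive hypothesis is applied only to indices $j$ with $v_j < v_i$ and never to $v_i$ itself.
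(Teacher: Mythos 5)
Your proof is correct and follows essentially the same route as the paper: induction along the poset (base case at minimal elements, inductive hypothesis on strict predecessors), exploiting reflexivity to factor the generator as $x_i$ times $1$ minus a product containing $x_i$. The only cosmetic difference is how the case split is organized — the paper splits on whether some predecessor coordinate vanishes, while you split on the dichotomy $a_i=0$ versus the product equaling $1$ — but the underlying argument is identical.
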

\begin{proof} Let $a \in V(J_P)$.
  Suppose that an element $v_i \in P$ is minimal. Then $x_i -
  x_i^2 \in J_P$, hence $a_i$ is $0$ or $1$. Now, take any $v_i$, and
  assume that for every $v_j < v_i$ we know that $a_j$ is $0$
  or $1$. Note that $x_i - x_i \prod_{v_j \leq v_i} x_j = x_i (1 - x_i \prod_{v_j < v_i} x_j)$.
  If any  $a_j$ is $0$, then $a_i$ must be $0$ too. If all $a_j$ are $1$ then $a_i (1-a_i) =0$.
\end{proof}

Moreover, we have:

\begin{theorem}[\cite{cd}]
  \label{thm:varequalsanti}
  For any finite poset $P$, $J_P$ is a radical
  zero-dimensional ideal. Then, it has a finite number of simple
  zeros. Furthermore,
  \begin{equation}
    \label{eq:16}
    |V(J_P)| = |{\mathcal A}(P)|.
  \end{equation}
\end{theorem}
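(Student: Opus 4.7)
The plan is to identify $V(J_P)$ explicitly and then establish simplicity of the zeros via a Jacobian argument. By Lemma~\ref{lem:zeroorone}, every $a \in V(J_P)$ lies in $\{0,1\}^n$, so it encodes a subset $S(a) = \{v_i : a_i = 1\}$ of $P$. Rewriting the $i$-th generator as $x_i\bigl(1 - x_i \prod_{v_j < v_i} x_j\bigr)$, the vanishing condition at a $0/1$ vector $a$ reduces to: if $a_i = 1$ then $a_j = 1$ for every $v_j < v_i$. Equivalently, $V(J_P)$ is exactly the set of characteristic vectors of the order ideals (down-closed subsets) of $P$.

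I would then invoke the classical bijection between order ideals and antichains of a finite poset: an order ideal is recovered from its set of maximal elements, which is an antichain, and conversely every antichain generates an order ideal by downward closure. This gives $|V(J_P)| = |{\mathcal A}(P)|$ immediately.

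It remains to check that $J_P$ is a zero-dimensional radical ideal. Zero-dimensionality is automatic since $V(J_P) \subseteq \{0,1\}^n$ is finite. For radicality I would apply the Jacobian criterion: it suffices to show that at every $a \in V(J_P)$ the Jacobian of the generators $f_i = x_i - x_i^2 \prod_{v_j < v_i} x_j$ has full rank. The key structural observation is that $\partial f_i/\partial x_k$ vanishes unless $k = i$ or $v_k < v_i$, so after relabelling the variables along a linear extension of $P$ (so that $v_k < v_i$ forces $k < i$) the Jacobian becomes lower triangular. The diagonal entries $\partial f_i/\partial x_i = 1 - 2 x_i \prod_{v_j<v_i} x_j$ evaluate to $1$ when $a_i = 0$ and to $-1$ when $a_i = 1$ (using the order-ideal property of $S(a)$ to force $a_j = 1$ for every $v_j < v_i$). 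Thus $\det J(a) = \pm 1$ at every point, so each zero is simple, $J_P$ is radical, and consequently $\dim_{\CC} \CC[\mathbf{x}]/J_P = |V(J_P)| = |{\mathcal A}(P)|$.

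The main obstacle I anticipate is the radicality step. The $0/1$ characterization of $V(J_P)$ and the bijection between order ideals and antichains are essentially bookkeeping once the generators are rewritten; the delicate point is aligning a linear extension of $P$ with the support pattern of the partial derivatives so that triangularity is transparent and the two cases $a_i = 0, 1$ can be handled uniformly on the diagonal.
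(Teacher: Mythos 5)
Your proof is correct. Note that the paper itself offers no argument for this theorem --- it is quoted from the reference [cd] --- so there is no internal proof to compare against; what the paper does contain is Lemma~\ref{lem:zeroorone} (which you legitimately invoke, since it precedes the theorem) and, later, an explicit bijection $f\colon V(J_P)\to{\mathcal A}(P)$ sending a zero to the maximal elements of its support, with inverse given by downward closure. Your identification of $V(J_P)$ with characteristic vectors of order ideals, followed by the classical order-ideal/antichain correspondence, is exactly that bijection in different clothing, so the counting step matches the paper's (later) treatment. The genuinely independent contribution is the radicality/simplicity argument: writing $f_i=x_i-x_i^2\prod_{v_j<v_i}x_j$, observing that $\partial f_i/\partial x_k=0$ unless $k=i$ or $v_k<v_i$, ordering the variables along a linear extension to make the Jacobian triangular, and checking the diagonal entries $1-2a_i\prod_{v_j<v_i}a_j=\pm1$ at each $0/1$ zero (using the order-ideal property when $a_i=1$). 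This is sound: with $n$ generators in $n$ variables and a nonsingular Jacobian at each point of the finite variety, Nakayama gives that each local ring of $\CC[\mathbf{x}]/J_P$ is $\CC$, so every zero is simple and the ideal is radical, which also recovers $\dim_\CC\CC[\mathbf{x}]/J_P=|V(J_P)|$. This buys a self-contained proof of the cited result, including the ``simple zeros'' assertion, at the modest cost of invoking the Jacobian criterion rather than the purely combinatorial/algebraic manipulations the paper uses elsewhere (e.g.\ in proving $J_P=J'_P$).
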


We now show that we can present $J_P$ as a zero-dimensional complete 
intersection by means of generators of lower degree.
A standard alternate way of dealing with a poset $P$ is
to look at the \emph{cover} relation. Given $a$ and $b$
in $P$, we say that $a \prec b$ (read ``$b$ covers $a$'') if and only
if $a < b$ and there is no $c \in P$ such that $a < c < b$. Using this
relation we define the ideal
\begin{equation}
  \label{eq:17}
  J'_P = \langle  x_i - x_i \prod_{v_j \preceq v_i} x_j, \text{ for all }
  v_i \in P \rangle.
\end{equation}

\begin{lemma}
  Let $P$ be a finite poset. Then
  \begin{equation}
    \label{eq:18}
    J_P = J'_P.
  \end{equation}
\end{lemma}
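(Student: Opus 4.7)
I would prove $J_P = J'_P$ by a single induction on the height $h(v_i)$ of each element of $P$, establishing simultaneously for every $v_i$ that all four polynomials
\[
x_i^2-x_i,\quad x_i-\pi_i,\quad f_i,\quad g_i
\]
lie in $J_P \cap J'_P$, where $\pi_i = \prod_{v_j \le v_i} x_j$ and $\sigma_i = \prod_{v_j \preceq v_i} x_j = x_i \prod_{v_k \prec v_i} x_k$, so that $f_i = x_i(1-\pi_i)$ and $g_i = x_i(1-\sigma_i)$. The base case is immediate: for minimal $v_i$, $\pi_i = \sigma_i = x_i$, so $f_i = g_i = x_i - x_i^2$ and all four memberships hold at once.

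For the inductive step, fix $v_i$ of positive height and assume the compound claim for every $v_j$ with $h(v_j) < h(v_i)$. The key auxiliary fact to establish first is the squarefree identity $x_i^2 \equiv x_i$ modulo each ideal. From $g_i \in J'_P$ one reads $x_i \equiv x_i^2 \prod_{v_k \prec v_i} x_k \pmod{J'_P}$; multiplying by $x_i$ yields $x_i^2 \equiv x_i^3 \prod_{v_k \prec v_i} x_k$, while substituting the relation into itself and applying the inductive $x_k^2 \equiv x_k$ to collapse $(\prod x_k)^2$ to $\prod x_k$ gives $x_i \equiv x_i^3 \prod_{v_k \prec v_i} x_k$. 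Comparison forces $x_i \equiv x_i^2 \pmod{J'_P}$; the parallel computation with $f_i$ (and the inductive hypothesis over all $v_j < v_i$) yields $x_i \equiv x_i^2 \pmod{J_P}$.

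With $x^2 \equiv x$ available in both quotients, the identity $x_i \equiv \pi_i \pmod{J_P}$ falls out of $f_i$ directly. Modulo $J'_P$, I would substitute the inductive identity $x_k \equiv \pi_k \pmod{J'_P}$ into $x_i\sigma_i$ for each cover $v_k \prec v_i$ and then squarefree-ize, using the combinatorial identity
\[
\bigcup_{v_k \prec v_i}\{v_j : v_j \le v_k\} \;=\; \{v_j : v_j < v_i\}
\]
to collapse the resulting product to $\pi_i$; together with $x_i \equiv x_i\sigma_i$ from $g_i$ this yields $x_i \equiv \pi_i \pmod{J'_P}$. From the common identity $x_i \equiv \pi_i$ the remaining memberships drop out by telescoping: $x_i\pi_i \equiv \pi_i^2 \equiv \pi_i \equiv x_i$ gives $f_i \in J'_P$, and symmetrically $x_i\sigma_i \equiv x_i\prod_{v_k \prec v_i}\pi_k \equiv \pi_i \equiv x_i$ gives $g_i \in J_P$.

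The principal obstacle will be the exponent bookkeeping in the iterated substitutions, since each step introduces squares of variables from strictly lower levels of $P$. The iteration terminates in the intended squarefree monomial identity only because $x_j^2 \equiv x_j$ has been secured at every lower height, which is why the four statements must be bundled into one inductive claim.
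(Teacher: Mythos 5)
Your argument is correct, but it follows a genuinely different route from the paper's. You establish the two inclusions $J_P \subseteq J'_P$ and $J'_P \subseteq J_P$ directly, by a height induction that rewrites each generator $f_i$ of $J_P$ (resp.\ $g_i$ of $J'_P$) as zero modulo the other ideal; the congruence bookkeeping goes through, since the union identity $\bigcup_{v_k \prec v_i}\{v_j : v_j \le v_k\} = \{v_j : v_j < v_i\}$ is valid in any finite poset and every squarefree collapse you perform invokes $x_j^2 \equiv x_j$ only at strictly smaller height, which your bundled inductive claim supplies (the bundling is indeed essential, as you note, because the step establishing $x_i-\pi_i \in J'_P$ consumes the relation $x_k \equiv \pi_k$ at the covers). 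The paper instead proves a single statement --- that $x_i - x_i^2 \in J'_P$ for every $i$, by an induction along covers whose computational core ($x_i x_{j_l} - x_i \in J'_P$, then telescoping the product) is very close in spirit to your Step~1 --- and from this concludes that $J'_P$ is radical; the equality $J_P = J'_P$ then falls out of the Nullstellensatz combined with the easy coincidence of varieties $V(J_P)=V(J'_P)$ and the previously quoted fact (Theorem~3.3) that $J_P$ is radical. The paper's route is shorter because it outsources half the work to the ideal--variety correspondence; yours is longer but self-contained: it does not rely on the external radicality of $J_P$, works over any field rather than an algebraically closed one, and in passing produces the additional low-degree membership $x_i - \pi_i \in J_P \cap J'_P$, which foreshadows the degree-two generators $x_ix_j - x_i$ of the universal Gr\"obner basis in Proposition~3.7.
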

\begin{proof}
  It is straightforward to see that the varieties of $J_P$ and $J'_P$
  coincide. We show that $J'_P$ is radical. Since we already know that
  $J_P$ is radical, this proves the equality.

It is enough to prove that the square-free polynomial
  $x_i-x_i^2$ is in $J'_P$ for all
  $v_i \in P$. We know this to be true for the minimal elements of
  $P$, by the very definition of $J'_P$. Suppose we have a non-minimal
  element $v_i$ in $P$. Let $v_{j_1},\ldots,v_{j_r}$ be the
  elements such that $v_{j_k} \prec v_i$, and assume that $x_{j_l} -
  x_{j_l}^2$ is in $J'_P$ for all $l$. First, we observe that
  $x_ix_{j_l} - x_i$ is in $J'_P$ for all $l$. Indeed,
  \[
  (x_{j_l} - 1)(x_i-x_i^2 \prod_{k=1}^r x_{j_k}) - \left(x_i^2
  \prod_{\substack{k=1\\k\neq l}}^r x_{j_k}\right)(x_{j_l} - x_{j_l}^2) =
  x_ix_{j_l} - x_i.
  \]
  Now, consider the following step:
  \begin{equation*}
    (x_i - x_i^2 \prod_{k = 1}^r x_{j_k}) - (x_i - x_ix_{j_r}) x_i
    \prod_{k = 1}^{r-1}x_{j_k}
    = x_i - x_i^2\prod_{k = 1}^{r-1}x_{j_k}.
  \end{equation*}
  Since $(x_i - x_i^2 \prod_{k = 1}^r x_{j_k})$ and $(x_i -
  x_ix_{j_r})$ are in $J'_P$, we have that $x_i - x_i^2\prod_{k =
    1}^{r-1}x_{j_k}$ is also in $J'_P$. If we apply this procedure
  repeatedly, we eliminate variables from the product, and eventually
  find that $x_i - x_i^2$ is in $J'_P$.
\end{proof}

We now take Theorem~\ref{thm:varequalsanti} one step further, and give
an explicit bijection between ${\mathcal A}(P)$ and $V(J_P)$.

\begin{proposition}
  Let $P$ be a finite poset. Define the
  function $f : V(J_P) \to {\mathcal A}(P)$ by
  \begin{equation*}
    \label{eq:19}
    f(a) = \{v_i \in P, \text{ such that } a_i = 1
    \text{ and } a_j = 0 \text{ for all } v_j > v_i \}.
  \end{equation*}
  The map $f$ is bijective, and its inverse $g : {\mathcal A}(P) \to
  V(J_P)$ is defined by
  \begin{equation*}
    g(S)= a', \text{ where } a'_i = 1 \text{ if }
    \exists\, v_j \in S \text{ such that } v_i \leq v_j \text{ and }
    a'_i =0 \text{ otherwise}.
  \end{equation*}
\end{proposition}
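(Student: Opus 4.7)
The plan is to first verify that both $f$ and $g$ are well-defined on the stated domains, then extract the crucial ``downward-closure'' property of points of $V(J_P)$ from the defining equations of $J_P$, and finally check the two compositions $f\circ g$ and $g\circ f$ are identities.

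For well-definedness of $f$, suppose $v_i,v_k\in f(a)$ with $v_i<v_k$. Then $a_k=1$ by membership of $v_k$, but the very definition of $f(a)$ forces $a_j=0$ for every $v_j>v_i$, contradicting $a_k=1$. So $f(a)$ is pairwise incomparable, hence an antichain. For well-definedness of $g$, I would check directly that $a'=g(S)$ satisfies every generator of $J_P$: the relation $x_i(1-\prod_{v_k\leq v_i}x_k)$ vanishes automatically at $a'$ when $a'_i=0$, and when $a'_i=1$ there is some $v_j\in S$ with $v_i\leq v_j$, so any $v_k\leq v_i$ also satisfies $v_k\leq v_j$ and hence $a'_k=1$, making the product equal to $1$.

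The key technical tool I would isolate next is the following downward-closure property: for any $a\in V(J_P)$, if $a_j=1$ and $v_i\leq v_j$ then $a_i=1$. This is immediate from evaluating the generator $x_j-x_j\prod_{v_k\leq v_j}x_k$ at $a$ and using Lemma~\ref{lem:zeroorone}: the relation gives $\prod_{v_k\leq v_j}a_k=1$, and since all coordinates are in $\{0,1\}$, every factor (including $a_i$) must equal $1$.

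With this in hand, checking $f(g(S))=S$ is a direct unwinding: if $v_i\in S$ then $a'_i=1$ and any $v_j>v_i$ with $a'_j=1$ would give an element of $S$ strictly above $v_i$, contradicting that $S$ is an antichain; conversely, if $v_i\in f(g(S))$ then $a'_i=1$ is witnessed by some $v_k\in S$ with $v_i\leq v_k$, and the maximality condition in $f$ forces $v_i=v_k\in S$. For $g(f(a))=a$, the forward direction uses a ``push up to a maximal $1$'' argument: if $a_i=1$, pick $v_j$ maximal in $P$ among those with $v_j\geq v_i$ and $a_j=1$; maximality places $v_j$ in $f(a)$, so $g(f(a))_i=1$. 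The reverse direction is exactly where the downward-closure property is needed: if $g(f(a))_i=1$, some $v_j\in f(a)$ lies above $v_i$ with $a_j=1$, and downward closure yields $a_i=1$. The step I expect to require the most care is this last one, since it is the only place where one must genuinely exploit the algebraic structure of $V(J_P)$ rather than purely combinatorial reasoning about antichains.
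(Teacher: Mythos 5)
Your proposal is correct and follows essentially the same route as the paper: verify that $f(a)$ is an antichain, verify that $g(S)$ satisfies each generator $x_i - x_i\prod_{v_k\leq v_i}x_k$ via transitivity, and then compare the compositions coordinatewise, with the downward-closure property (which the paper uses implicitly in the step ``$v_i\leq v_j$, $v_j\in f(a)$, therefore $a_i=1$'') made explicit. If anything, your write-up is slightly more complete, since the paper only spells out $g\circ f=\mathrm{id}$ and leaves the $f\circ g$ direction (or an appeal to the cardinality equality of Theorem~\ref{thm:varequalsanti}) to the reader, whereas you check both compositions directly.
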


\begin{proof}
It is clear from the definition of $f$ that no  pair of elements of the subset
$f(a)$ can be comparable for any $a \in V(J_P)$, that is, that $f(a)$ is
indeed an antichain. Reciprocally, let $S$ be an antichain of $P$ and let
$a' = g(S)$. We need to see that $ a'_i (1 - a'_i  \prod_{v_k \leq v_i} a'_k) = 0$
for all $i$. This is clear if $a'_i =0$. When $a'_i =1$, there exists $v_j \in S$ with
$v_j \geq v_i$. By the transitivity of the order relation we deduce that $a'_k =1$ for all
$v_k \leq v_i$ and so the equation is satisfied.

 Let $a$ be an element of
  $V(J_P)$. 
 Let $S = f(a)$ and  $a' = g(S)$. We
  want to show that $a = a'$. Suppose that $a'_i =
  1$. Then $\exists\, v_j \in S$ such that $v_i \leq v_j$, and
  therefore $a_i = 1$. By a similar argument, if $a'_i = 0$, then $a_i
  = 0$.
\end{proof}

We now describe the universal reduced Gr\"obner basis of $J_P = J'_P$. 

\begin{proposition}
  \label{prop:gb}
  The universal, reduced Gr\"obner Basis of $J_P$ is the set
  $\mathit{Gb_P}$ of polynomials
  \begin{align*}
    \mathit{gb}_i & = x_i^2 - x_i \qquad \forall\, v_i \in P,\\
    \mathit{gb}_{(j,i)} & = x_i x_j - x_i \qquad \forall\, v_j \leq v_i.
  \end{align*}
\end{proposition}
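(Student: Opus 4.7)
The plan is to verify three things in turn: (a) every polynomial listed in $\mathit{Gb}_P$ actually lies in $J_P$; (b) $\mathit{Gb}_P$ is a Gr\"obner basis with respect to \emph{every} monomial order on $\CC[x_1,\ldots,x_n]$; (c) this basis is reduced.

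For (a), the polynomials $\mathit{gb}_i = x_i^2 - x_i$ have already been shown to lie in $J_P = J'_P$ in the proof of the previous lemma. For the mixed generators $\mathit{gb}_{(j,i)} = x_ix_j - x_i$ with $v_j \leq v_i$, instead of exhibiting explicit algebraic identities, I would exploit the fact that $J_P$ is radical and $V(J_P) \subseteq \{0,1\}^n$ (Lemma~\ref{lem:zeroorone} and Theorem~\ref{thm:varequalsanti}). It is then enough to check vanishing on $V(J_P)$: using the bijection $g$ of the previous proposition, an element $a \in V(J_P)$ satisfies $a_i = 1$ iff there is some $v_k$ in the associated antichain with $v_i \leq v_k$; if $v_j \leq v_i$, then by transitivity $v_j \leq v_k$ as well, so $a_j = 1$, and hence $a_i(a_j - 1) = 0$.

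For (b), the crucial observation is that in any monomial order $1 < x_j$, so $x_i < x_ix_j$ and $x_i < x_i^2$. Consequently the leading terms of the elements of $\mathit{Gb}_P$ are $\{x_i^2 : v_i \in P\} \cup \{x_ix_j : v_j \leq v_i\}$, \emph{independently} of the chosen monomial order. Let $M$ be the monomial ideal they generate; by construction $M \subseteq \mathrm{in}(J_P)$. I would then count standard monomials for $M$: a non-square-free monomial is killed by some $x_i^2$, and a square-free monomial $\prod_{i \in T}x_i$ avoids $M$ precisely when $T$ has no comparable pair, i.e.\ when $T \in \mathcal{A}(P)$. Thus the vector space $\CC[\mathbf{x}]/M$ has dimension $|\mathcal{A}(P)|$, which by Theorem~\ref{thm:varequalsanti} coincides with $|V(J_P)| = \dim_\CC \CC[\mathbf{x}]/J_P = \dim_\CC \CC[\mathbf{x}]/\mathrm{in}(J_P)$. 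This equality of dimensions, together with the inclusion $M \subseteq \mathrm{in}(J_P)$, forces $M = \mathrm{in}(J_P)$, proving that $\mathit{Gb}_P$ is a Gr\"obner basis for every monomial order, i.e.\ a universal Gr\"obner basis. For (c), each polynomial in $\mathit{Gb}_P$ is monic and its unique trailing term is a variable $x_i$, which cannot be divisible by any of the degree-$2$ leading monomials in $M$; so the basis is reduced.

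I do not expect a real obstacle here: the only mildly delicate point is avoiding explicit manipulations for membership of $\mathit{gb}_{(j,i)}$ in $J_P$, which the radicality shortcut sidesteps cleanly. The heart of the argument is the dimension count in (b), which works because the standard-monomial count on the monomial side matches exactly the antichain count already available from Theorem~\ref{thm:varequalsanti}.
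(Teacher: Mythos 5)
Your proof is correct, but it reaches the conclusion by a genuinely different route than the paper. The paper first shows $\langle \mathit{Gb_P}\rangle = J_P$ by comparing varieties and observing that $\langle \mathit{Gb_P}\rangle$ is radical (it contains the square-free univariate polynomial $x_i^2-x_i$ in every variable), and then verifies Buchberger's criterion by an explicit case analysis of all S-polynomials, where transitivity of $\leq$ is what makes the reductions in the nontrivial cases land back in $\mathit{Gb_P}$; universality comes, as in your argument, from the fact that the leading terms $x_i^2$ and $x_ix_j$ are the same for every monomial order. You instead avoid Buchberger entirely: membership $\mathit{Gb_P}\subseteq J_P$ follows from vanishing on $V(J_P)\subseteq\{0,1\}^n$ together with radicality of $J_P$ (Theorem~\ref{thm:varequalsanti}), and then you compare $M=\langle x_i^2,\ x_ix_j : v_j\le v_i\rangle\subseteq \mathrm{in}_<(J_P)$ by a dimension count: the standard monomials of $M$ are exactly the square-free monomials indexed by antichains, so $\dim_\KK \KK[\mathbf{x}]/M=|\mathcal A(P)|=|V(J_P)|=\dim_\KK \KK[\mathbf{x}]/J_P=\dim_\KK \KK[\mathbf{x}]/\mathrm{in}_<(J_P)$, and the finite equal dimensions force $M=\mathrm{in}_<(J_P)$. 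What your approach buys is brevity and no computation: all S-pair checking and all explicit ideal-membership identities disappear, at the price of invoking the external count $|V(J_P)|=|\mathcal A(P)|$ from~\cite{cd} and the standard facts $\dim \KK[\mathbf{x}]/I=\dim \KK[\mathbf{x}]/\mathrm{in}_<(I)$ and $\dim=|V|$ for radical zero-dimensional ideals (both already cited in the paper, and independent of this proposition, so there is no circularity); the paper's argument is more self-contained and makes visible exactly where transitivity and antisymmetry of the order enter. Two small remarks: your membership check for $\mathit{gb}_{(j,i)}$ does not even need the bijection $g$ --- if $a\in V(J_P)$ has $a_i=1$, the defining equation $a_i(1-\prod_{v_k\le v_i}a_k)=0$ already forces $a_j=1$ for all $v_j\le v_i$; and for reducedness you should also note that the leading monomials are pairwise distinct and none divides another, which uses antisymmetry to exclude having both $x_ix_j-x_i$ and $x_ix_j-x_j$ in the set (the point the paper records as its case~7).
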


\begin{proof}
   Lemma~\ref{lem:zeroorone} shows that the elements of $V(J_P)$ are
  strings of $0$'s and $1$'s. The polynomials $x_i^2 - x_i$ are in
  $Gb_P$, and therefore the elements of $V(\mathit{Gb_P})$ are also
  strings of $0$'s and $1$'s. Let $\mathbf{x} = (x_i)_{v_i \in P}$ be
  a string of $0$'s and $1$'s. $\mathbf{x} \in V(J_P)$ if and only if
  $\forall\, v_i \in P,\, (x_i = 0 \Leftrightarrow (\exists\, v_j \leq
  v_i \text{ such that } x_j = 0))$. But this is equivalent to
  $\mathbf{x} \in V(\mathit{Gb_P})$. Then, $\mathit{Gb_P}$ is
  zero-dimensional, and contains a square-free univariate polynomial
  in each variable ($\mathit{gb_i}$). Therefore it is also
  radical. This shows that the ideal generated by $\mathit{Gb_P}$ coincides
  with $J_P$.
  
  We now prove that $\mathit{Gb_P}$ is a Gr\"obner Basis for any
  monomial order $<$. Recall that, given $<$ and a non-zero polynomial
  $p$, $LT_<(p)$ denotes the largest term of $p$, with respect to $<$.
  Clearly, $ LT_<(\mathit{gb}_i) = x_i^2$ and
  $LT_<(\mathit{gb}_{(j,i)}) = x_i x_j$.  Given any two polynomials in
  the set $\mathit{Gb_P}$, we show that their $S$-polynomial is
  divisible by the polynomials in $\mathit(Gb_P)$. If we let
  $p=x_ix_j-x_i$ and $q = x_kx_\ell-x_k$ be two polynomials in
  $\mathit{Gb_P}$, all possible combinations of the indices $i$, $j$,
  $k$ and $\ell$ boil down to the following non-trivial possibilities
  for $(p,q)$ (with $i,j,k,\ell$ all different):

  \begin{enumerate}
  \item $(x_i^2- x_i,x_kx_i- x_k)$ or $(x_ix_j- x_i,x_kx_j- x_k)$
    $\Rightarrow S(p,q) = 0$.
  \item $(x_i^2- x_i,x_ix_\ell- x_i) \Rightarrow S(p,q) =
    \mathit{gb}_i - \mathit{gb}_{(\ell,i)}$.
  \item $(x_ix_j- x_i,x_ix_\ell- x_i) \Rightarrow S(p,q) =
    \mathit{gb}_{(j,i)} - \mathit{gb}_{(\ell,i)}$.
  \item $(x_ix_j- x_i,x_kx_i- x_k) \Rightarrow S(p,q) =
    \mathit{gb}_{(j,k)} - \mathit{gb}_{(i,k)}$.
  \item $(x_i x_j- x_i,x_j x_\ell- x_j) \Rightarrow S(p,q) =
    \mathit{gb}_{(\ell,i)} - \mathit{gb}_{(j,i)}$.
  \item $(x_i^2- x_i,x_k^2- x_k)$ or $(x_i^2- x_i,x_kx_\ell- x_k)$ or
    $(x_ix_j- x_i,x_kx_\ell- x_k)$. In all three cases, since the
    leading monomials of $p$ and $q$ are coprime, $S(p,q)$ is
    divisible by $(p,q)$.
  \item $(x_ix_j- x_i,x_jx_i- x_j)$. This can only hold if $v_i \leq
    v_j$ and $v_j \leq v_i$, that is, $v_i = v_j$.
  \end{enumerate}

  In cases $4$ and $5$ above, we know that $\mathit{gb}_{(j,k)}$ and
  $\mathit{gb}_{(\ell,i)}$, respectively, are in $\mathit{Gb_P}$,
  because a partial order relation is \emph{transitive}. Therefore,
  $\mathit{Gb_P}$ is a Gr\"obner Basis.

  Finally, none of the polynomials are redundant, all of the leading
  coefficients are one, and the ``other'' monomial in each polynomial
  of $\mathit{Gb_P}$ has degree $1$, so it cannot be divisible by any
  leading monomial of $\mathit{Gb_P}$. Therefore, $\mathit{Gb_P}$ is
  a reduced universal Gr\"obner Basis of $J_P$.
\end{proof}

We can count the antichains of $P$ by studying $J_P$. We have seen
that $|{\mathcal A}(P)| = |V(J_P)|$. It is
well-known ({\cite[Theorem~2.2.10]{clo}}) that as $J_P$ is radical, it holds
that
  \begin{equation*}
    |V(J_P)| = \dim_\KK(\KK[\mathbf{x}]/J_P).
  \end{equation*}
The Hilbert Series algorithm could help us to compute
$\dim_\KK(\KK[\mathbf{x}]/J_P)$, but it requires that the ideal $J_P$
be homogeneous, which is not the case. This is circumvented by
considering an initial ideal of $J_P$. If $<$ is a monomial order and
$I$ is an ideal, the initial ideal of $LT_<(I)$ is defined by
\begin{equation*}
  LT_<(I) = \langle LT_<(p), p \in I \rangle.
\end{equation*}
By {\cite[Chapter 5, Section 3]{iva}}
  \begin{equation*}
    \dim_\KK(\KK[\mathbf{x}]/I) =
    \dim_\KK(\KK[\mathbf{x}]/\mathit{LT}_<(I)).
  \end{equation*} 
In particular, we have the following equality
\begin{equation}
  |{\mathcal A}(P)| =   \dim_\KK(\KK[\mathbf{x}]/LT_<(J_P)).
\end{equation}

Let $\mathit{Gb_P}$ be the (universal)
Gr\"obner basis of $J_P$ in the statement of
Proposition~\ref{prop:gb}.
By the definition of a Gr\"obner Basis, it holds that for any
monomial order, 
 $ \mathit{LT}_<(J_P) = \langle \mathit{LT}_<(g), g \in \mathit{Gb_P}
  \rangle$.
Note that this initial ideal
has the same structure of the ideals $I_G$ in Section~\ref{sec:hilbert}.
In fact, it equals $I_{G(P)}$.

\section{Independent sets in bipartite Co\-hen-Ma\-cau\-lay graphs}
\label{sec:cohen}

Let $G$ be a graph, and $I'_G$ its edge ideal. We say that $G$ is a
Cohen-Macaulay graph if $\KK[\mathbf{x}]/I'_G$ is a Cohen-Macaulay
$\KK[x]$-module. The quotient $\KK[\mathbf{x}]/I_G$ is always
Cohen-Macaulay, because $I_G$ is zero-dimensional. Cohen-Macaulay
rings and modules are extensively studied in \cite{brunsherzog}, and
the article \cite{cmgraphs} covers Cohen-Macaulay graphs.

Not every graph is Cohen-Macaulay, of course. For example, the path of
length three (see Figure~\ref{fig:P3}) has the edge ideal $J_{P_3} =
\langle x_1x_2,x_2x_3\rangle$, defined in $\KK[x_1,x_2,x_3]$. The
quotient $\KK[x_1,x_2,x_3]/J_{P_3}$ is not Cohen-Macaulay.  It is not
even equidimensional, since the zero set of $J_{P_3}$ consists of the
plane $x_2 = 0$, together with the line $x_1 = x_3 = 0$.
\begin{figure}[ht]
  \centering
  \includegraphics{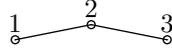}
  \caption{The path of length three $P_3$}
  \label{fig:P3}
\end{figure}

One particularly interesting subfamily of Cohen-Macaulay graphs are
\emph{bipartite Cohen-Macaulay graphs}. 

\begin{definition}
  Let $G = (V_1 \sqcup V_2,E )$ be a bipartite graph. Then $G$ is a
  Cohen-Macaulay graph if and only if $\KK[\mathbf{x}]/I'_G$ is a
  Cohen-Macaulay $\KK[x]$-module.
\end{definition}

There is an equivalent characterization, given by the following
result.

\begin{theorem}[\cite{herhibi}]
  Let $G=(V_1 \sqcup V_2,E)$ be a bipartite graph. We say that $G$ is
  a Cohen-Macaulay graph if $|V_1| = |V_2|$, and the vertices $V_1 =
  \{x_1,\ldots,x_n\}$ and $V_2 = \{y_1,\ldots,y_n\}$ can be labeled in
  such a way that
  \begin{enumerate}
  \item $(x_i,y_i) \in E$ for all $i=1,\ldots,n$;
  \item if $(x_i,y_j) \in E$, then $i \leq j$;
  \item if $(x_i,y_j)$ and $(x_j,y_k)$ are edges, then $(x_i,y_k)$ is
    also an edge.
  \end{enumerate}
\end{theorem}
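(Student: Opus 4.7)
The plan is to prove the characterization by splitting into the two implications, using the Stanley--Reisner correspondence between $I'_G$ and the simplicial complex $\Delta(G)$ whose faces are the independent sets of $G$. Recall that $\KK[\mathbf{x}]/I'_G$ is Cohen--Macaulay if and only if $\Delta(G)$ is a Cohen--Macaulay complex, which requires in particular that $\Delta(G)$ be pure (all facets of the same cardinality).

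A preliminary observation simplifies both directions: conditions (1)--(3) encode a partial order on $\{1,\ldots,n\}$. Setting $i \leq_P j$ if and only if $(x_i,y_j) \in E$, condition (1) yields reflexivity, condition (2) antisymmetry, and condition (3) transitivity. So a valid labeling furnishes a poset $P$ on $n$ elements, and $G$ is entirely determined by $P$; conversely, starting from such a $P$ one recovers the bipartite graph.

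For the sufficient direction, I would argue as follows. Given a labeling satisfying (1)--(3), the set $\{(x_i,y_i) : 1 \leq i \leq n\}$ is a perfect matching, and a direct combinatorial analysis using the induced poset $P$ shows that every minimal vertex cover of $G$ has size exactly $n$; in fact minimal vertex covers correspond bijectively to order filters of $P$. Hence the facets of $\Delta(G)$ all have cardinality $n$, and $\Delta(G)$ is pure of dimension $n-1$. To upgrade purity to the Cohen--Macaulay property I would exhibit a shelling of $\Delta(G)$ whose shelling order is induced by a linear extension of $P$, following the standard technique for simplicial complexes built from distributive lattices. Shellability implies Cohen--Macaulayness, and hence $\KK[\mathbf{x}]/I'_G$ is a Cohen--Macaulay module.

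For the necessary direction, the Cohen--Macaulay hypothesis first forces unmixedness of $I'_G$. Combined with K\"onig's theorem on bipartite graphs (minimum vertex cover equals maximum matching), unmixedness forces the existence of a perfect matching, so $|V_1| = |V_2| = n$ and after relabeling (1) can be assumed. Defining $i \leq_P j$ iff $(x_i,y_j) \in E$ and using the matching, I would derive (2) and (3) from the depth condition. The main obstacle is condition (3): the configuration $(x_i,y_j),(x_j,y_k) \in E$ but $(x_i,y_k) \notin E$ does not contradict purity alone. To rule it out, one invokes Reisner's criterion on a carefully chosen induced subcomplex (the link of an appropriate face), exhibiting non-vanishing reduced homology and contradicting the depth lower bound imposed by Cohen--Macaulayness. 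This is the step that uses the full strength of the hypothesis rather than just unmixedness.
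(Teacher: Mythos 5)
The paper does not prove this statement at all---it is quoted from \cite{herhibi}---so the only question is whether your outline stands on its own, and as written it does not. The parts you do carry out are fine: the reformulation of (1)--(3) as a poset $P$, the bijection between maximal independent sets (equivalently minimal vertex covers) and order filters of $P$, hence purity of $\Delta(G)$, and the use of unmixedness plus K\"onig's theorem to extract the perfect matching in the converse direction. But both decisive steps are announced rather than proved. In the sufficiency direction you say you ``would exhibit a shelling \ldots following the standard technique''; since the facets are indexed by the order filters of $P$, the shelling order must come from a linear extension of the distributive lattice of filters (not of $P$ itself, as you write), and checking the shelling condition is exactly the content of this implication---nothing in the proposal does it.

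In the necessity direction you have also misidentified where the full Cohen--Macaulay hypothesis is needed. Condition (3) follows from purity alone: if $(x_i,y_j),(x_j,y_k)\in E$ but $(x_i,y_k)\notin E$ with $i,j,k$ distinct, then $\{x_i,y_k\}$ is independent, any maximal independent set $M$ containing it excludes both $x_j$ (adjacent to $y_k$) and $y_j$ (adjacent to $x_i$), and since $M$ meets each matched pair $\{x_\ell,y_\ell\}$ at most once we get $|M|\le n-1$, contradicting unmixedness (recall $V_1,V_2$ are maximal independent sets of size $n$). What genuinely requires more than unmixedness is condition (2), i.e.\ antisymmetry/acyclicity: the $4$-cycle with edges $(x_1,y_1),(x_2,y_2),(x_1,y_2),(x_2,y_1)$ is unmixed and satisfies (1) and (3), admits no labeling satisfying (2), and is not Cohen--Macaulay (its independence complex is two disjoint edges, hence disconnected in dimension $1$). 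Your plan to ``invoke Reisner's criterion on a carefully chosen link'' is aimed at (3), where it is unnecessary, while for (2), where it is the crux, you specify neither the face whose link you take nor why that link has nonvanishing reduced homology; since induced subgraphs of Cohen--Macaulay graphs need not be Cohen--Macaulay, reducing a directed cycle (which, using (3), collapses to the $4$-cycle configuration above) to a link computation is precisely the nontrivial step. Herzog and Hibi avoid this route altogether, arguing via Alexander duality and ideals with linear quotients. Until the shelling and this reduction are actually supplied, your proposal is a roadmap, not a proof.
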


There are two ways of seeing a bipartite Cohen-Macaulay graph $G=(V_1 \sqcup V_2,E)$ as a
poset. The obvious way is to set the following partial order on the nodes of
$G$: $x \leq y$ if and only if $x = y$ or $x \in V_1$, $y \in V_2$ and $(x,y)$ is
an edge of $G$. That is, one chooses one of the parts as the ``upper''
one.

The other way, which we will consider here, involves a different construction.
Let $G=(V_1 \sqcup V_2,E)$ be a bipartite Cohen-Macaulay graph. We
define a poset $P_G$ as follows. The elements of $P_G$
are those of $V_1$. Given $x_i$ and $x_j$, we set $x_i \leq x_j$ if
and only if the edge $(x_i,y_j)$ is in $E$. From the transitivity of
bipartite Cohen-Macaulay graphs, we see that $P_G$ is a poset.

Conversely, let $P$ be a poset, with elements
$x_1,\ldots,x_r$. We build a bipartite graph $G_P = (V,E)$ as follows. We set $V = V_1
\sqcup V_2$, with $V_1 = \{x_1,\ldots,x_r\}$ and $V_2 =
\{y_1,\ldots,y_r\}$. We put the edges $(x_i,y_i)$ in $E$ for all $i$,
and we have the edge $(x_i,y_j)$ if and only if $x_i \leq x_j$ in
$P$. In this case, the transitivity of $\leq$ ensures that $G_P$ is a
bipartite Cohen-Macaulay graph.

The following lemma is straightforward.

\begin{lemma} \label{lem:inverse}
  The two transformations
  \begin{equation*}
    P \mapsto G_P \quad \text{and} \quad  G \mapsto P_G
  \end{equation*}
   are inverses.
\end{lemma}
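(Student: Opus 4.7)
The plan is to verify that both compositions are the identity by unwinding the definitions. The argument splits into showing $P_{G_P} = P$ for every poset $P$ and $G_{P_G} = G$ for every bipartite Cohen-Macaulay graph $G$, and neither direction appears to require more than following the constructions through.

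For the first identity, I would start with a poset $P$ on $\{x_1,\ldots,x_r\}$. By construction, $G_P$ has the edges $(x_i,y_i)$ for every $i$ together with the edges $(x_i,y_j)$ for which $x_i \leq x_j$ in $P$. Applying the transformation $G \mapsto P_G$ to $G_P$ produces a poset on the same underlying set $\{x_1,\ldots,x_r\}$, in which $x_i \leq x_j$ iff $(x_i,y_j)$ is an edge of $G_P$. Since reflexivity $x_i \leq x_i$ in $P$ corresponds exactly to the edge $(x_i,y_i)$, and the remaining edges of $G_P$ encode the non-reflexive relations, the order of $P_{G_P}$ coincides with that of $P$.

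For the second identity, I would first pause to verify that $P_G$ genuinely is a poset when $G$ is bipartite Cohen-Macaulay: reflexivity follows from condition (1) of the Herzog--Hibi characterization (which supplies the edges $(x_i,y_i)$), antisymmetry from condition (2) (if $(x_i,y_j),(x_j,y_i) \in E$ then $i \leq j$ and $j \leq i$, forcing $i=j$), and transitivity from condition (3). With that in hand, I would form $G_{P_G}$: its vertex set is $\{x_1,\ldots,x_r\} \sqcup \{y_1,\ldots,y_r\}$, matching that of $G$, and its edges are the pairs $(x_i,y_i)$ together with those $(x_i,y_j)$ for which $x_i \leq x_j$ in $P_G$, i.e.\ for which $(x_i,y_j) \in E$. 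Condition (1) already guarantees $(x_i,y_i) \in E$, so the edge set of $G_{P_G}$ equals $E$.

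The only potential source of difficulty is bookkeeping about labels: both constructions depend on the specific labeling fixed in their definitions (the labeling of $V_1, V_2$ provided by the Herzog--Hibi theorem, and the labeling of the elements of $P$), and the identities hold on the nose only once one respects these choices. Since the labels are preserved by the constructions themselves, no further argument is needed and the verification reduces to the direct unwindings above.
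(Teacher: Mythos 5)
Your verification is correct and is exactly the direct unwinding the paper has in mind when it calls the lemma straightforward (the paper gives no written proof); checking that $P_G$ is a poset via the Herzog--Hibi conditions is likewise already asserted in the text just before the lemma. Nothing further is needed.
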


We now compare the independence polynomial of a bipartite Cohen-Macaulay
graph $G$ with the antichain polynomial of the poset $P_G$.

\begin{lemma} \label{lem:bip}
  Let $I(G,x)$ be the independence
  polynomial of a bipartite Cohen-Macaulay graph $G$ and let $A(P_G,x)$ 
  be the antichain polynomial of its associated poset  $P_G$. Then
  \begin{equation*}
    I(G,x) = A(P_G,2x).
  \end{equation*}
\end{lemma}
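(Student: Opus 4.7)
The plan is to prove the identity via a size-preserving bijection between independent sets of $G$ and pairs $(T,\epsilon)$, where $T \in \mathcal{A}(P_G)$ is an antichain and $\epsilon\colon T \to \{0,1\}$ is an arbitrary labeling. Such a bijection would yield immediately
\[
I(G,x) \;=\; \sum_{T \in \mathcal{A}(P_G)} \sum_{\epsilon\colon T\to\{0,1\}} x^{|T|} \;=\; \sum_{T} 2^{|T|}\,x^{|T|} \;=\; A(P_G,\,2x).
\]

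First I would apply Lemma~\ref{lem:inverse} to identify $G$ with $G_P$ for $P = P_G$, so that the edges are exactly the matched pairs $(x_i,y_i)$ together with the cross-pairs $(x_i,y_j)$ such that $x_i \le x_j$ in $P$. In the forward direction, for each antichain $T \subseteq V_1$ and each labeling $\epsilon$ I set
\[
\psi(T,\epsilon) \;=\; \{x_i : x_i \in T,\ \epsilon(x_i)=0\} \;\cup\; \{y_i : x_i \in T,\ \epsilon(x_i)=1\}.
\]
Independence is immediate: each matched edge $(x_i,y_i)$ is avoided because only one side is chosen per element of $T$, and any cross-edge $(x_i,y_j)$ with $i \ne j$ inside $\psi(T,\epsilon)$ would force $x_i \le x_j$, contradicting the antichain property of $T$.

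For the inverse direction, given an independent set $S$ I set $T(S) = \{x_i \in V_1 : x_i \in S \text{ or } y_i \in S\}$ and let $\epsilon_S$ record, for each $x_i \in T(S)$, which of $x_i,y_i$ actually lies in $S$ (the matched edge $(x_i,y_i)$ forbids both). The crux is then to verify that $T(S)$ is an antichain of $P_G$. For $x_i, x_j \in T(S)$ with $x_i \le x_j$ and $i \ne j$ I would run through the four possibilities for $(\epsilon_S(x_i), \epsilon_S(x_j))$: the case $(0,1)$ exposes the forbidden cross-edge $(x_i, y_j)$ directly, while the other three cases must be closed by invoking the Herzog--Hibi transitivity condition~(3) together with the existence of the matched edges.

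The main obstacle is precisely this antichain verification, since the three cases other than $(0,1)$ do not supply an immediate edge violation; closing them requires extracting the full Cohen--Macaulay structure encoded in condition~(3). A natural fallback, should the direct argument prove recalcitrant, is to induct on $|P_G|$, peeling off a maximal element $v$ and splitting the count of independent sets according to whether $v$ contributes through $x_v$, through $y_v$, or not at all --- an approach that mirrors the Hilbert-series recursion from Section~\ref{sec:hilbert} with pivot $x_v$. The trivial base case $|P_G|=0$ settled, the inductive step reduces to matching this three-way recursion against the corresponding substitution-$2x$ recursion for $A(P_G,2x)$.
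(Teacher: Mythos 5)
Your forward map $\psi$ is sound, but the gap you flagged in the inverse direction is real and cannot be closed: the three cases other than $(0,1)$ are not removable obstacles, they are counterexamples to the statement itself. If $x_i < x_j$ in $P_G$ with $i \neq j$, then the pair $\{x_i,x_j\}$ lies entirely inside the part $V_1$ of the bipartite graph $G$ and so carries no edge; it is therefore an independent set, and likewise $\{y_i,y_j\}$ and (by antisymmetry of $\leq$, which forbids the edge $(x_j,y_i)$) also $\{y_i,x_j\}$. None of these sets is of the form $\psi(T,\epsilon)$ for an antichain $T$, so $\psi$ is not surjective and no appeal to the Herzog--Hibi transitivity condition can fix this. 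Concretely, let $P_G$ be the two-element chain $a<b$: then $G$ is the path $y_a - x_a - y_b - x_b$ on four vertices, with $I(G,x)=1+4x+3x^2$, while $A(P_G,x)=1+2x$ and hence $A(P_G,2x)=1+4x$. The missing term $3x^2$ counts exactly the three ``bad'' independent pairs above.

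For what it is worth, the paper's own proof is the same $2^k$-expansion you set up, and it silently asserts precisely the step you could not complete (``any independent set of $G$ can be seen in this way for a unique antichain''); so you have located an error in the lemma rather than merely failed to reprove it. Your fallback induction on $|P_G|$ would break down for the same reason, already at posets with one covering relation. What the construction actually gives is that the independent sets of $G$ are the pairs $(A,B)$ with $A\subseteq V_1$, $B\subseteq V_2$ and $B$ disjoint from the up-set generated by $A$, so for instance $I(G,1)=\sum_{A\subseteq P_G}2^{\,|P_G|-|U(A)|}$ with $U(A)$ the up-set of $A$; this does not reduce to an evaluation of $A(P_G,x)$, and the correct antichain count recovered from $G$ is the number of \emph{maximal} independent sets, not a specialization of $I(G,x)$.
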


  \begin{proof}
The construction
outlined above expands every element of the poset $P_G$
into a segment in the bipartite Cohen-Macaulay graph $G$.
An antichain $S$ of size $k$ in 
  $P_G$ gives rise to $2^k$ independent sets
  of size $k$ in the bipartite graph $G$, since we can replace
  any $x_i \in S$ by either the node $x_i$ or the node $y_i$ of G.
  It is clear that any independent set of $G$ can be seen in
  this way for a unique antichain $S$ of $P_G$.
\end{proof}

\section{Complexity results} \label{sec:complex}

Is it classically known that it is not possible to count the number of antichains of
a general poset (that is,
to evaluate its antichain polynomial at $1$) in polynomial time unless $P = \#P$ \cite{provanball}.
We extend this result in Theorem~\ref{thm:evalantihard} to the
evaluation at any non-zero rational number $t$, 
by a translation and specialization of \cite[Theorem 2.2]{salvador} to
the context of finite posets. We then use our previous results to deduce in Corollaries~\ref{cor:Hilberthard}
and~\ref{cor:CMhard} the hardness of evaluating the Hilbert function of 
initial ideals of zero-dimensional radical ideals and the independence polynomial of
Cohen-Macaulay bipartite graphs. 

We start with some definitions. 

\begin{definition}  We 
  define the {lexicographic product} poset $P_1[P_2]$ of two finite
  posets $P_1$ and $P_2$ as the set $P_1 \times P_2$,
  ordered by the relation $(x,i) \leq (y,j)$ if $x \leq y \land x = y
  \Rightarrow i \leq j$.
  Similarly, we define the lexicographic product graph $G_1[G_2]$ of
  two graphs as the set $G_1 \times G_2$ with $(i,j)$ adjacent to
  $(k, l)$ iff $i$ is adjacent to $k$ or if $i=k$ and $j $ is adjacent to
  $l$.
\end{definition}

It is easy to check that $P_1[P_2]$ is indeed a poset. 

Given a natural number $m$, denote by $\tilde{K}_m$ the poset given by the set
  $\{1,\ldots,m\}$, ordered with the usual $\leq$ relation.
The associated comparability graph is the complete graph $K_m$ in
$m$ nodes, whose independence polynomial equals $I(K_m,x) = 1 + mx$.

It is straightforward to check that the comparability
graph of the lexicographic product $P_1[P_2]$ of two posets
equals the lexicographic product $G(P_1)[G(P_2)]$ of the respective
comparability graphs. We therefore have:

\begin{lemma} \label{lem:salvador}
For any poset $P$ and $m \in {\mathbb N}$, the comparability graph
of the lexicographic product $P[\tilde{K}_m]$ equals the lexicographic product of the graphs
$G(P)[K_m]$.
\end{lemma}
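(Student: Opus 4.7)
The claim is essentially a specialisation of the more general identity
$G(P_1[P_2]) = G(P_1)[G(P_2)]$ that the paper asserts just before the lemma.
My plan is to establish this general identity first by a direct unwinding of
the two definitions, and then take $P_2 = \tilde{K}_m$, using the obvious
fact that $G(\tilde{K}_m) = K_m$ (since $\tilde{K}_m$ is a totally ordered
set, every pair of distinct elements is comparable).

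For the general identity, I would fix two distinct vertices $(x,i)$ and
$(y,j)$ in $P_1 \times P_2$ and compare the two adjacency conditions.
According to the definition of $P_1[P_2]$, the pair $(x,i),(y,j)$ is
comparable precisely in one of two mutually exclusive situations: either
$x \neq y$ and $x,y$ are comparable in $P_1$, or $x = y$ and $i,j$ are
comparable in $P_2$. On the other side, by definition of the lexicographic
product of graphs, $(x,i)$ is adjacent to $(y,j)$ in $G(P_1)[G(P_2)]$
exactly when $x$ is adjacent to $y$ in $G(P_1)$ (i.e.\ $x \neq y$ and
$x,y$ comparable in $P_1$), or $x=y$ and $i$ adjacent to $j$ in $G(P_2)$.
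The two conditions manifestly coincide, so the edge sets agree and the two
graphs are equal.

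With the general identity in hand, applying it with $P_2 = \tilde{K}_m$
and $G(\tilde{K}_m) = K_m$ immediately gives
$G(P[\tilde{K}_m]) = G(P)[K_m]$, which is the lemma. I do not expect any
genuine obstacle: the argument is a bookkeeping check, and the only point
that requires a bit of care is keeping the two cases (same first coordinate
versus different first coordinates) straight when matching the ``comparable''
condition against the ``adjacent'' condition.
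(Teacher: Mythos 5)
Your proposal is correct and follows the same route the paper takes: the paper simply asserts that $G(P_1[P_2]) = G(P_1)[G(P_2)]$ is straightforward to check and then specializes to $P_2 = \tilde{K}_m$ with $G(\tilde{K}_m) = K_m$, which is exactly your argument, just with the definitional unwinding written out. No gaps.
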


We are now ready to prove the following theorem:
 
\begin{theorem}
  \label{thm:evalantihard}
  Evaluating the antichain polynomial of any finite poset $P$ at any 
  non-zero rational number $t$ is $\#P$-hard. 
\end{theorem}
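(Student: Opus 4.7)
The plan is to give a polynomial-time Turing reduction from the problem of counting all antichains of an arbitrary finite poset, which is $\#P$-hard by Provan and Ball~\cite{provanball}, to the problem of evaluating the antichain polynomial at the fixed non-zero rational $t$.

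The gadget I would use is the lexicographic product $P[\tilde{K}_m]$ of $P$ with a chain of $m$ elements. The first key step is to establish the identity
\[
A(P[\tilde{K}_m], x) \;=\; A(P, mx).
\]
The most direct way to see this is combinatorial: in $P[\tilde{K}_m]$, any two elements sharing a first coordinate are comparable (because $\tilde{K}_m$ is totally ordered), while two elements with distinct first coordinates $v, w$ are comparable in $P[\tilde K_m]$ exactly when $v, w$ are comparable in $P$. Hence antichains of $P[\tilde{K}_m]$ of size $k$ are in bijection with pairs $(S, f)$, where $S$ is a $k$-element antichain of $P$ and $f \colon S \to \{1,\ldots,m\}$ is an arbitrary function, contributing $m^k$ antichains for each $k$-element antichain of $P$. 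An alternative derivation combines Lemma~\ref{lem:salvador} with the standard lexicographic-product formula $I(G[H], x) = I(G, I(H,x) - 1)$ for independence polynomials, specialized at $H = K_m$ with $I(K_m, x) = 1 + mx$.

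Now suppose that for some fixed non-zero rational $t$ we are given a polynomial-time oracle that, on input a finite poset $Q$, returns $A(Q, t)$. Given a target poset $P$ on $n$ elements, I would query the oracle on $P[\tilde{K}_m]$ for $m = 1, 2, \ldots, n+1$. Each such $P[\tilde K_m]$ has $mn \le n(n+1)$ elements, so each query is of polynomial size, and the returned rationals have bit-length polynomial in the input. By the identity above, these queries deliver the values $A(P, mt)$, which are the values of the polynomial $A(P, x) \in \ZZ[x]$ at the $n+1$ pairwise distinct arguments $t, 2t, \ldots, (n+1)t$ (distinctness uses $t \neq 0$).

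Since $A(P, x)$ has degree at most $n$, standard Lagrange interpolation recovers it from these $n+1$ evaluations, and substituting $x = 1$ yields $A(P, 1) = |{\mathcal A}(P)|$. Thus we have reduced counting antichains to polynomially many evaluations at $t$, proving $\#P$-hardness. The main conceptual ingredient is the lexicographic-product identity $A(P[\tilde{K}_m], x) = A(P, mx)$; the rest follows the familiar interpolation blueprint of \cite[Theorem~2.2]{salvador}, and the only thing one must watch is that all rational arithmetic in the interpolation stays of polynomial bit-size, which is routine for a fixed rational $t$.
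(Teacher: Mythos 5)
Your proposal is correct and follows essentially the same route as the paper: reduce counting all antichains (hard by Provan--Ball) to evaluation at $t$ via the posets $P[\tilde{K}_m]$, $m=1,\dots,n+1$, the identity $A(P[\tilde{K}_m],x)=A(P,mx)$, and interpolation of the degree-$\le n$ polynomial $A(P,x)$ from its values at the distinct points $mt$. The only cosmetic differences are that you prove the lexicographic-product identity by a direct bijection instead of only citing the product formula for independence polynomials, and you phrase the recovery step as Lagrange interpolation rather than solving the Vandermonde system $M_{ij}=(jt)^{i-1}$, which is the same computation.
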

\begin{proof}
We mimic the arguments in \cite[Theorem 2.2]{salvador}.
Suppose, on the contrary, that given any poset $P$ on $n$ vertices, there exists an $O(n^k)$-algorithm to compute 
$A(P,t)$ for some constant $k$.
Then, given a poset $P$ with $n$ vertices, consider the posets
$P[\tilde{K}_m]$  for $m=1, \dots, n+1$.
It follows from Lemma~\ref{lem:salvador} that we can use
 the reasoning in \cite[Theorem 2.2]{salvador} to deduce that
 that $A(P,mt) = A(P[\tilde{K}]_m, t)$.  In fact, by
   \cite[Theorem 1]{brown}), $A(P[\tilde{K}]_m, t) = A (P, A(\tilde{K}_m,t) -1) =
   A(P, mt)$. As the posets
  $P[\tilde{K}_m], m = 1, \dots, n+1$ can be constructed in polynomial time from the data 
  of $P$, it would be possible to compute $A(P, mt)$ in polynomial time for $m=1, \dots, n+1$.
  But then, the coefficients $i_j$ of $A(P,x) = \sum_{j=0}^n i_j x^j$ 
  would be computed in polynomial time by solving the $(n+1) \times (n+1)$ linear system with 
  invertible matrix
  $M= (M_{ij})$ given by $M_{ij} = (j t)^{i-1}, \, i, j =1, \dots, n+1$. 
  
  It follows that the number of antichains $|\mathcal A (P)|$ of $P$ would be computable in
  polynomial time by adding $\sum_j i_j$. But this counting problem is $\#P$-complete
   \cite{provanball}.
\end{proof}

Combining this complexity results with the algebraic results of the previous sections,
we have the following two corollaries.

\begin{corollary}
   \label{cor:Hilberthard}
   No algorithm can evaluate the Hilbert Series at a fixed non-zero rational number
   $t$ in polynomial time when applied to initial ideals of radical zero-dimensional
   complete intersections, unless $\#P = P$.
 \end{corollary}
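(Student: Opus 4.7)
The plan is to reduce the $\#P$-hard problem of evaluating the antichain polynomial of a finite poset at $t$ (Theorem~\ref{thm:evalantihard}) to the problem of evaluating the Hilbert Series at $t$ of an initial ideal of a radical zero-dimensional complete intersection. The bridge is provided by the family $J_P$ studied in Section~\ref{sec:posets-gb} together with the observation immediately preceding this corollary that the initial ideal of $J_P$ is the modified edge ideal $I_{G(P)}$ of the comparability graph of $P$.

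Concretely, given a finite poset $P$ on $n$ elements, I would first invoke Theorem~\ref{thm:varequalsanti} to see that $J_P \subset \CC[x_1, \ldots, x_n]$ is a radical zero-dimensional complete intersection, with $n$ defining polynomials of the form given in~(\ref{eq:15}). Next, fix any monomial order $<$. By Proposition~\ref{prop:gb}, the set $\mathit{Gb_P}$ is a universal Gr\"obner basis of $J_P$, and its leading terms are exactly $x_i^2$ for every $v_i \in P$ together with $x_i x_j$ for every pair $v_j \leq v_i$. Hence $\mathit{LT}_<(J_P) = I_{G(P)}$, and both $J_P$ and $I_{G(P)}$ admit polynomial-size descriptions that can be constructed from $P$ in polynomial time.

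Since $I_{G(P)}$ is the modified edge ideal of $G(P)$, the Hilbert Series of $\KK[\mathbf{x}]/I_{G(P)}$ equals the independence polynomial $I(G(P), z)$, which by definition is the antichain polynomial $A(P, z)$. Therefore, evaluating this Hilbert Series at a fixed non-zero rational $t$ returns exactly $A(P, t)$. A hypothetical polynomial-time algorithm for evaluating Hilbert Series of initial ideals of radical zero-dimensional complete intersections at $t$, composed with the polynomial-time reduction $P \mapsto I_{G(P)}$, would yield a polynomial-time algorithm for evaluating $A(P, t)$, contradicting Theorem~\ref{thm:evalantihard} unless $\#P = P$.

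There is no real obstacle here: the proof is essentially a one-line composition of Theorem~\ref{thm:varequalsanti}, Proposition~\ref{prop:gb}, the identification $\mathit{LT}_<(J_P) = I_{G(P)}$, and Theorem~\ref{thm:evalantihard}. The only point meriting a brief comment in the write-up is that the class of ideals in the statement is meant to include the initial ideals $\mathit{LT}_<(J_P)$, and that the reduction from $P$ to these ideals is explicit and polynomial in the size of $P$.
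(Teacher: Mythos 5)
Your proposal is correct and follows essentially the same route as the paper: it identifies the Hilbert Series of the initial ideal $\mathit{LT}_<(J_P) = I_{G(P)}$ with the antichain polynomial $A(P,x)$ via Proposition~\ref{prop:gb} and the results of Sections~\ref{sec:hilbert} and~\ref{sec:posets-gb}, and then invokes Theorem~\ref{thm:evalantihard}. The extra details you spell out (polynomial-time constructibility of the reduction $P \mapsto I_{G(P)}$ and the complete-intersection property of $J_P$) are left implicit in the paper but are consistent with its argument.
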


\begin{proof}
By the results of Sections~\ref{sec:hilbert} and~\ref{sec:posets-gb}, the Hilbert Series of
the initial ideal $I_{G(P)}$  of the radical
zero-dimensional ideal $J_P$ associated to any poset, equals the antichain polynomial
$A(P,x)$.  The result follows from Theorem~\ref{thm:evalantihard}.
\end{proof}

\begin{corollary}
  \label{cor:CMhard}
  There can be no polynomial algorithm to evaluate at any non-zero rational
  number $t$ the
  independence polynomial of bipartite Cohen-Macaulay graphs unless
  $\#P = P$.
\end{corollary}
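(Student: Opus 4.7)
The plan is to exhibit a polynomial-time reduction from evaluating antichain polynomials of arbitrary finite posets to evaluating independence polynomials of bipartite Cohen-Macaulay graphs, and then invoke Theorem~\ref{thm:evalantihard}.

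Fix any non-zero rational number $t$, and suppose for contradiction that there is a polynomial-time algorithm $\mathcal{A}$ that, given a bipartite Cohen-Macaulay graph $G$, outputs $I(G, t)$. Given an arbitrary finite poset $P$ on $n$ elements, I would first apply the construction $P \mapsto G_P$ from Section~\ref{sec:cohen}. This takes polynomial time because $G_P$ has exactly $2n$ vertices and at most $n^2$ edges, all readable from the order relation of $P$. Moreover, $G_P$ is by construction a bipartite Cohen-Macaulay graph, so $\mathcal{A}$ may legitimately be called on it.

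Next I would chain together Lemma~\ref{lem:inverse} and Lemma~\ref{lem:bip}: since the two transformations are mutually inverse, $P_{G_P} = P$, and therefore
\begin{equation*}
I(G_P, t) \;=\; A(P_{G_P}, 2t) \;=\; A(P, 2t).
\end{equation*}
Hence calling $\mathcal{A}$ on $G_P$ returns $A(P, 2t)$ in polynomial time in $n$. Since $t \neq 0$ implies $2t \neq 0$, we have produced a polynomial-time algorithm evaluating the antichain polynomial of an arbitrary finite poset at the fixed non-zero rational number $2t$. By Theorem~\ref{thm:evalantihard} this is $\#P$-hard, so we conclude $\#P = P$, contradicting the hypothesis.

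No step here is really the main obstacle: the conceptual work has been done in the preceding sections. The only points to be careful about are verifying that $G_P$ is indeed bipartite Cohen-Macaulay (immediate from the construction in Section~\ref{sec:cohen}), that the encoding of $G_P$ can be written down in polynomial time from $P$, and that the rational $2t$ at which we end up evaluating the antichain polynomial is still non-zero, so that Theorem~\ref{thm:evalantihard} applies. All three are essentially free, making the corollary a direct combination of the two earlier results.
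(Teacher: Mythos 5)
Your proposal is correct and follows essentially the same route as the paper: combine Lemma~\ref{lem:bip} with Lemma~\ref{lem:inverse} via the polynomial-time construction $P \mapsto G_P$, and invoke Theorem~\ref{thm:evalantihard}. The only (immaterial) difference is that you evaluate $I(G_P,t)$ and land on $A(P,2t)$, whereas the paper evaluates $I(G,t/2)$ to land on $A(P,t)$; since Theorem~\ref{thm:evalantihard} covers every non-zero rational, both choices work equally well.
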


\begin{proof}
 By Lemma~\ref{lem:bip},   the independence polynomial $I(G,p)$ 
  of a bipartite Cohen-Macaulay graph $G$  and the  
  antichain polynomial $A(P_G,x)$  of its associated poset  $P_G$ are
  related by the equality $ I(G,x) = A(P_G,2x)$. So, any polynomial
  algorithm to evaluate $I(G,t/2)$ in polynomial time for any
  Cohen-Macaulay graph $G$, would allow us 
  to evaluate $A(P,t)$ in polynomial time for any poset by
  Lemma~\ref{lem:inverse}.
  The result now follows from Theorem~\ref{thm:evalantihard}.
\end{proof}

\section{Some experimental observations}
\label{sec:exper}

We tested the three Computer Algebra Systems \cocoa, \singular and
Macaulay 2. The examples we used were the posets consisting of the
power set of $\{1,\ldots,n\}$, ordered by inclusion (Boolean
lattice). Of the three systems, only \cocoa managed to count the
antichains for $n = 7$. 
These numbers (called Dedekind numbers) are
known for $n$ up to $13$. However, those computations required many
hours of supercomputer time \cite{dednums}.

The strategy employed by \cocoa for the Hilbert Numerator algorithm
seems to be generally good. We made some observations about it
in~\cite{PortoAlegre}. We have also tested a recent software package,
EdgeIdeals (\cite{EdgeIdeals}). EdgeIdeals allows us to compute the
Hilbert Series of a modified edge ideal of a graph $G$ by computing
the $f$-vector of the simplicial complex associated with the edge
ideal of $G$. The simplicial complex also contains a description of
the standard monomials of the modified edge ideal of $G$. The
computation of both objects (the $f$-vector and the standard
monomials) was faster using EdgeIdeals for the Boolean lattice, 
compared to the native Macaulay 2 implementation of hilbertSeries, for $n$ up to
$6$.  


\bibliographystyle{plain}
\bibliography{bibliography}
\end{document}